\documentclass[10pt]{amsart}
\usepackage{amsfonts}
\usepackage{amssymb}
\usepackage{amsthm}
\usepackage{verbatim}
\usepackage[dvips]{graphicx}
\theoremstyle{plain}
\newtheorem{theorem}{Theorem}[section]
\newtheorem{lemma}[theorem]{Lemma}
\newtheorem{definition}[theorem]{Definition}
\newtheorem{corollary}[theorem]{Corollary}
\newtheorem{prop}[theorem]{Proposition}

\newcommand{\C}{\mathbb{C}}
\newcommand{\Z}{\mathbb{Z}}

\renewcommand{\P}{\mathbb{P}}
\newcommand{\bs}{\backslash}
\renewcommand{\epsilon}{\varepsilon}

\renewcommand{\phi}{\varphi}

\newcommand{\Q}{\mathcal{Q}}
\renewcommand{\H}{\mathcal{H}}

\newcommand{\M}{\mathcal{M}}
\newcommand{\tM}{\tilde{M}}
\newcommand{\tq}{\tilde{q}}

\newcommand{\T}{\mathcal{T}}

\renewcommand{\l}{\lambda}
\renewcommand{\a}{\alpha}
\renewcommand{\b}{\beta}
\newcommand{\g}{\gamma}

\begin{document}
\title[Connected components]{Connected components of strata of quadratic differentials over Teichmuller space.}
\author {Katharine C. Walker}
\email{kaceyw@umich.edu} 
\begin{abstract} In this paper, we study connected components of strata of the space of quadratic differentials lying over $\T_g$. We use certain general properties of sections of line bundles to put a upper bound on the number of connected components, and a generalized version of the Gauss map as an invariant to put a lower bound on the number of such components. For strata with sufficiently many zeroes of the same order we can state precisely the number of components.   
\end{abstract}

\maketitle
\section{Introduction}
We define $\Q_g$ to be the space of all pairs $(M, q)$ where $M$ is an element of $\T_g$ and $q$ is a quadratic differential on $M$. Alternatively let $K=K_M$ be the holomorphic cotangent bundle on $M$, let $H^0(M, K^2)$ be the space of sections of $K_M^2$, and define $\Q_g$ to be the total space of the bundle over $\T_g$ with fiber $H^0(M, K^2)$, where by custom we remove sections that are the squares of sections of $K$. We may define an analogous space over the moduli space of curves of genus $g$, $\M_g$, which we will refer to as $\Q D_g$. 
 
Both $\Q_g$ and $\Q D_g$ carry a natural stratification given by the orders of the zeroes of the quadratic differential. Let $\l=(k_1, k_2,...,k_n)$ be a partition of $4g-4$ and define $\Q_g(k_1, k_2,...,k_n)=\Q_\l$ to be the subspace of $\Q_g$ of $(M,q)$ such that $q$ has $n$ zeroes, with multiplicities $k_1,...,k_n$.  We use the notation $(k_1^{m_1},k_2^{m_2},...,k_n^{m_n})$ to abbreviate strata with large numbers of zeroes of the same order. 

Interestingly, while most of the $\Q D_\l$ are connected, there are three families of strata of high codimension that have two components. In each case, one of the components consists solely of \textit{hyperelliptic} quadratic differentials - $(M,q)$ such that $M$ is hyperelliptic and $q$ is invariant under the hyperelliptic involution. Lanneau proves this in \cite{L1}. Similarly, in \cite{KZ} Kontsevich and Zorich classify the connected components of strata of Abelian differentials over $\M_g$. Again most strata are connected but certain strata of high codimension may have up to three components. These are classified again by hyperellipticity, but also by even and odd spin structure. In \cite{L2}, Lanneau shows that spin structure cannot be used to classify connected components of the $\Q D_\l$. 

In this paper we are interested in the connected components of the $\Q_\l$. In Section \ref{sec:up} we use certain general facts about sections of line bundles to show that for $\l$ with sufficiently many zeroes of the same order we may put an upper bound on the number of connected components. In Section \ref{sec:low} we use an analog of spin structure to put a lower bound on the number of connected components of $\Q_\l$ for $\l=(k_1,...,k_n)$ with all $k_i$ even. The main result is as follows. 
\begin{theorem} Let $g \ge 2$ and $m \ge g$. Then: 
\begin{enumerate}
\item  Any stratum of the form $\Q_g(1^m, k_1^{n_1},...,k_l^{n_l})$ is connected.
\item For $k_1,...,k_l$ all even, any stratum of the form $\Q_g(2^m, k_1^{n_1},...,k_l^{n_l})$ has exactly $2^{2g}-1$ connected components. 
\end{enumerate}
\end{theorem}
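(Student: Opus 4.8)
The plan is to prove the two inequalities separately and observe that they match: the upper bound on the number of components is the content of Section~\ref{sec:up}, and in part~(2) the matching lower bound is the content of Section~\ref{sec:low}. The first step is to pass to a fiberwise picture. Since $\T_g$ is contractible, the bundle $\Q_g\to\T_g$ and the relative Jacobian are topologically trivial, the local system $H^1(-;\Z/2)$ is canonically trivialized (so that the $2$-torsion of $\mathrm{Pic}^0(M)$ is identified with $(\Z/2)^{2g}$ uniformly in $M$), and the connected components of $\Q_\l$ are in bijection with those of a single fiber $\Q_\l|_M$. Over a fixed $M$, a point of $\Q_\l$ is an effective divisor $D$ of type $\l$ with $\mathcal O_M(D)\cong K_M^2$ together with a nonzero scalar --- the section with divisor $D$ being unique up to scale --- so everything reduces to the structure of the variety of such divisors.

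For part~(1), take $(M,q)\in\Q_g(1^m,k_1^{n_1},\dots)$ with $m\ge g$ and write $\mathrm{div}(q)=p_1+\dots+p_m+E$, with $E$ supported on the higher-order zeros. Holding $E$ fixed, the simple zeros $p_1+\dots+p_m$ sweep out the complete linear system $|K_M^2(-E)|$, which has degree exactly $m\ge g$ and is therefore a nonempty projective space; as $E$ itself varies over the (connected) configuration space of the higher-order zeros, these linear systems fit together into an incidence variety which one shows is irreducible, and removing the complex-codimension $\ge1$ loci of collisions leaves its open stratum of genuine type-$\l$ divisors connected. With $\T_g$ connected, $\Q_g(1^m,\dots)$ has at most one component; it is nonempty because for $m\ge g$ the numerical type avoids the known empty strata and the linear system above already produces a differential. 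Hence it is connected.

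For part~(2), take $(M,q)\in\Q_g(2^m,k_1^{n_1},\dots)$ with all $k_i$ even and set $D'=\tfrac12\mathrm{div}(q)$, a divisor of type $(1^m,(k_1/2)^{n_1},\dots)$ with $\mathcal O_M(D')^{\otimes 2}\cong K_M^2$. Then $\tau:=[\mathcal O_M(D')\otimes K_M^{-1}]$ is a class in $\mathrm{Pic}^0(M)[2]\cong(\Z/2)^{2g}$, it is locally constant on $\Q_\l$, and the value $\tau=0$ cannot occur: it would make $D'$ a canonical divisor $\mathrm{div}(\omega)$, forcing $q=c\,\omega^2$ to be the square of an abelian differential, which is excluded from $\Q_g$. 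For each of the $2^{2g}-1$ values $\tau\ne0$, the divisors $D'$ of the given type with $\mathcal O_M(D')\cong K_M\otimes\tau$ form a connected family by the argument of part~(1) applied to $D'$ --- again the $m\ge g$ simple points of $D'$ move in a complete linear system of degree $m$, namely $|K_M\tau(-E')|$ --- so $\Q_g(2^m,\dots)$ has at most $2^{2g}-1$ components. Section~\ref{sec:low} shows the invariant attains each of the $2^{2g}-1$ nonzero values of $\tau$ on a nonempty piece (this is where the generalized Gauss map enters), giving at least that many components; hence exactly $2^{2g}-1$.

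The main obstacle is the upper bound: showing that $m\ge g$ zeros of one order really do collapse the component count. The sensitive points are (i) that the incidence variety of type-$\l$ divisors with a fixed class is irreducible, so that discarding the collision loci does not disconnect it --- and $m\ge g$ is precisely what forces the residual linear systems $|K_M^2(-E)|$ (resp.\ $|K_M\tau(-E')|$) to be nonempty of the expected dimension rather than jumping or becoming empty; (ii) arranging the deformations that slide the higher-order zeros so as to stay inside the open stratum $\Q_\l$ rather than degenerating into a smaller one; and (iii) in the even case, verifying that $\tau$ varies continuously and that $\tau=0$ is the unique value killed by the ``no squares'' convention, so the final count is $2^{2g}-1$ and not $2^{2g}$.
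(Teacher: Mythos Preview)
Your overall architecture matches the paper's: an upper bound via linear-system/Abel--Jacobi considerations (Section~\ref{sec:up}) and, for part~(2), a matching lower bound from the monodromy/Gauss-map invariant (Section~\ref{sec:low}, specifically Theorem~\ref{thm:mcg}). The identification of the locally constant invariant with a nonzero $2$-torsion class in $\mathrm{Pic}^0$ is exactly the paper's decomposition into the pieces $(\P\Q_\l)_{K_i}$.

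The substantive divergence is in how the upper bound is established. You fix the higher-order part $E$ and let all $m$ simple zeros sweep out $|K_M^2(-E)|$, then assert that the resulting incidence variety over the configuration space of $E$ is irreducible. The paper instead peels off exactly $g$ of the simple zeros: it sends $(M,q)$ to the higher-order zeros together with the first $m-g$ simple zeros (the map $\phi'$ of Lemma~\ref{lem:c1}), observes that the remaining $g$ simple zeros then lie in a degree-$g$ linear system which generically has $h^0=1$, and concludes that $\phi'$ is generically \emph{injective}. Connectedness of a full-dimensional open set $U$ follows at once, and Lanneau's classification (Theorem~\ref{thm:lanneau}) is invoked to rule out stray lower-dimensional components.

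Your version leaves a real gap at the point you yourself flag as sensitive point~(i). The fibers $|K_M^2(-E)|$ are projective spaces, but their dimension can jump: $m\ge g$ guarantees $h^0\ge 1$ via Riemann--Roch, not that $h^1=0$, so your parenthetical that $m\ge g$ ``forces \dots the expected dimension rather than jumping'' is incorrect. A family of projective spaces of varying dimension over an irreducible base need not have irreducible total space, and you give no argument here. The paper's device of splitting off only $g$ simple zeros is precisely what converts this into a birational picture where irreducibility on an open set is automatic. You should either adopt that decomposition or supply a genuine irreducibility proof for your incidence variety. Separately, your reduction to a single fiber via contractibility of $\T_g$ presumes that $\Q_\l\to\T_g$ is a fiber bundle, which you do not justify; the paper sidesteps this by working over all of $\T_g$ and using Theorem~\ref{thm:lanneau} to ensure all components of $\P\Q_\l^{num}$ have the same dimension.
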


Each of the components of this second family of strata is made up of sections that are squares of sections of one of the $2^{2g}$ line bundles that square to $K^2$, excluding the squares of Abelian differentials.

\section{Background} \label{sec:back}
Given a quadratic differential, $(M,q)$, one may construct a double cover of $M$, possibly ramified, $\pi: \tM \to M$, and define a quadratic differential $\tq:=\pi^*(q)$ on $\tM$ by taking the pullback of $q$. If $q$ has a singularity of order $k$ at a ramification point of $\pi$, then $\tq$ has a singularity of order $2k+2$ ($k \in \{-1,0,1,...\}$, where 0 corresponds to a regular point of $q$) at that point. One may see this by recalling that a zero of order $k$ corresponds to a cone angle of $(k+2)\pi$; thus, ramifying at that point gives a cone angle of $(2k+4) \pi$ or a singularity of order $2k+2$. In certain cases $\pi_*(q)$ may then be the square of an Abelian differential. We define strata of Abelian differentials analogously to strata of quadratic differentials, and denote them by $\H_g(l_1,l_2,...,l_n)$, $\sum l_i=2g-2$. 

\begin{definition} Let $(M,q) \in \Q_\l$ be a quadratic differential that is not the square of an Abelian differential. The \textbf{canonical double cover} $\pi: \tM \to M$ is the minimal cover such that $\pi^*(q)$ is the square of an Abelian differential, $\omega$, on $\tM$. The ramification points of $\pi$ are the singularities of $q$ of odd order (of which there may be none).
\end{definition}

We will use this construction to associate a unique Abelian differential to any quadratic differential. Notice that `nearby' quadratic differentials will map to the same stratum of Abelian differentials so, choosing a branch of the square root map, we get a local mapping $\Q_g(k_1,..,k_n) \to \H_{\tilde{g}}(\tilde{k}_1, \tilde{k}_2,...,\tilde{k}_m)$ (see \cite{L1}, for example, for a proof). 

A \textit{spin structure} on $M$ is a choice of a line bundle that squares to the canonical bundle, $K_M$ - $L \in Pic(M)$ such that $L^{\otimes 2} = K_M$. On a curve of genus $g$ there are $2^{2g}$ different spin structures. The \textit{parity} of the spin structure is the dimension of its space of holomorphic sections, $H^0(M,L)$, mod 2. In \cite{KZ} Kontsevich and Zorich use parity to differentiate connected components of Abelian differentials. 

We can also define the spin structure in a more topological way. If we let $P \to M$ be the unit tangent bundle on $M$, a spin structure on $M$ may be viewed as a double cover $Q \to P$ whose restriction to each fiber of $P$ is the standard double cover $S^1 \to S^1$. The double cover gives a monodromy map $\pi_1(P) \to \Z_2$, which because $\Z_2$ is Abelian induces a map $\xi:H_1(P, \Z_2) \to \Z_2$. The map has non-zero value on the cycle representing the fiber of $P$ by definition. 

For a class in $H_1(M, \Z_2)$ that contains a simple closed smooth curve, there is a natural lift to a class in  $H_1(P, \Z_2)$, given as follows. Let $\g \in [\g]$ be a simple closed curve, and take each point in $\g$ to its tangent vector. Call the resulting curve (the \textit{framed curve} of $\g$) $\hat{\g}$. The map $[\g] \mapsto [\hat{\g}]$ is known to be well-defined, and thus $\xi$ associates a value in $\Z_2$ to each homology class of simple closed curves. 
 
Finally, recall that an Abelian differential, $\omega$ on $M$, corresponds to a flat structure on $M$ with isolated cone angles of degree $2k\pi$ and trivial holonomy. Away from the finite set of points with cone angles, $M$ has a well-defined notion of horizontal and thus there is a well-defined Gauss map, taking unit tangent vectors to points on the unit circle: 
$$G: T(M \bs \omega^{-1}(0)) \to S^1$$
A smooth simple closed curve has a well-defined framed curve, and thus has a degree under $G$. This degree coincides with the value of the curve under $\xi$, (mod 2).  
\begin{figure} 
\begin{center}
\includegraphics[width=.4 \textwidth]{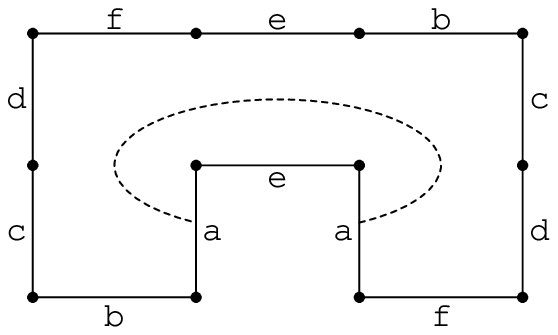} 
\ \ \ \ \ 
\includegraphics[width=.4 \textwidth]{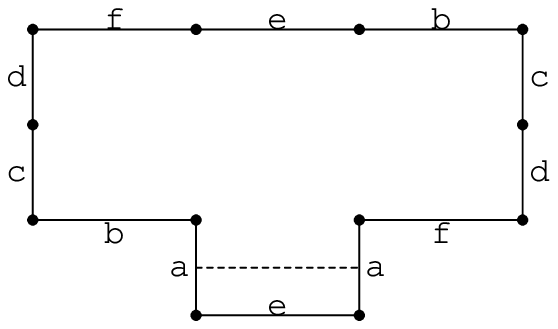}
\end{center}
\caption{2 elements of $\H_3(4)$ with oriented line fields induced by $dz$ and curves whose degrees under the Gauss map are $1$ and $0$ respectively.}
 \label{fig:gauss}
\end{figure}

For example, consider Figure \ref{fig:gauss}. This shows two genus 3 surfaces which have oriented line fields induced by $dz$. Thus we have two Abelian differentials, elements of $\H_3(4)$. The degree of the curve is $\pm 1$ (depending on choice of orientation) in the left example, and 0 in the right example.  

While we cannot use spin structure to classify connected components of the $\Q_\l$, notice that there are also $2^{2g}$ choices of square roots of $K^2$, and that if $\l$ has zeroes of only even order, any $(M,q) \in \Q_\l$ will be the square of a section in one of these square root bundles. In Section \ref{sec:low}, we use the degree of a basis of curves in $H_1(M, \Z)$ under an analog of the Gauss map to classify connected components of such $\Q_\l$.  

Finally, the following theorem from \cite{L1} classifies the connected components of all $\Q D_\l$.

\begin{theorem} [Lanneau]  \label{thm:lanneau}
For $g \ge 3$ the following strata have two connected components: 
\begin{enumerate}
\item $\Q D_g(4(g-k)-6, 4k+2)$, $k \ge 0, g-k \ge 2$
\item $\Q D_g((2(g-k)-3)^2, 4k+2)$, $k \ge 0, g-k \ge 1$
\item $\Q D_g((2(g-k)-3)^2, 2k+1^2)$, $k \ge 0, g-k \ge 2$
\end{enumerate}
and the rest have one component. 
For $g=0,1$, all strata are connected, and for $g=2$, $\Q D_2(3,3,-1,-1)$ and $\Q D_2(6,-1,-1)$ have two components, but all others are connected.
\end{theorem}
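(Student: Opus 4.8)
The plan is to combine a lower bound on $|\pi_0(\Q D_\l)|$ coming from a deformation invariant with a matching upper bound from explicit connectedness arguments, the invariant being \emph{hyperellipticity}. The natural setting is period coordinates: away from squares of Abelian differentials the canonical double cover $\pi\colon\tM\to M$ carries $\tq=\pi^*q=\omega^2$ for an Abelian differential $\omega$, and the deck-anti-invariant relative periods of $\omega$ give holomorphic local coordinates on $\Q D_\l$. In these coordinates a path in the stratum is a deformation of the flat half-translation structure fixing the cone-angle profile $\l$, so two differentials lie in one component exactly when such a deformation connects them. This also makes precise how to import the surgery toolkit of \cite{KZ} through the cover, though the image of $\Q D_\l$ is only a special sublocus of $\H_{\tilde g}(\tilde\l)$, so one cannot simply quote the Abelian classification.

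For the lower bound I would isolate the hyperelliptic locus $\Q D_\l^{\mathrm{hyp}}$, consisting of $(M,q)$ with $M$ hyperelliptic and $\tau^*q=q$ for the hyperelliptic involution $\tau$; equivalently $q$ descends to a meromorphic quadratic differential on $\P^1=M/\tau$. A zero of $q$ fixed by $\tau$ lies over a branch point and, from the local model $z=w^2$, necessarily has even order; every other zero occurs in a $\tau$-orbit of two zeros of equal order. Running this constraint against $\sum k_i=4g-4$ shows that the partitions admitting a \emph{full-dimensional} hyperelliptic configuration are precisely the three families $(1)$--$(3)$ (two fixed even-order zeros; one fixed even-order zero together with a symmetric odd-order pair; or two symmetric odd-order pairs). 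The differentials realizing a fixed such $\l$ are parametrized by configurations of branch and marked points on $\P^1$, a connected space, so $\Q D_\l^{\mathrm{hyp}}$ is connected; it is closed because $\tau$ passes to limits, and the dimension count shows it is full-dimensional, hence --- being cut out in anti-invariant period coordinates by the symmetry --- open. Thus it is a genuine connected component, giving at least two.

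The upper bound --- that the complement is a single component --- is the crux and the expected main obstacle. Following the inductive scheme of \cite{KZ}, I would reduce by surgeries realized as explicit paths: breaking up a zero of order $k$ into zeros summing to $k$ (and the reverse collision), moving zeros along saddle connections, and \emph{bubbling a handle} to relate genus $g$ to genus $g-1$. The goal is to carry any non-hyperelliptic $(M,q)$ to a fixed normal form in a small base stratum, so that connectedness follows by induction. Two points are delicate. First, one must track at every surgery whether the symmetry $\tau$ is created or destroyed, so that the extra component appears for exactly the families $(1)$--$(3)$ and the hyperelliptic and non-hyperelliptic parts are never joined. Second, one must verify that no further invariant survives: the spin parity of $\omega$ on $\tM$, which refines components in the Abelian case, is shown in \cite{L2} not to descend to an invariant of $\Q D_\l$, so once hyperellipticity is accounted for the complement admits no additional splitting.

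Finally I would handle the bookkeeping and the small-genus cases. The parity forced by $\tau$-invariance, together with $\sum k_i=4g-4$, pins down the exact ranges $k\ge 0$, $g-k\ge 2$ (resp.\ $g-k\ge 1$) in each family. The cases $g=0,1$ are treated directly: the genus is too small to support the hyperelliptic-versus-generic dichotomy, so every stratum is connected. The case $g=2$ is genuinely exceptional --- every curve is hyperelliptic and $\tau$ acts trivially on $H^0(M,K^2)$, so hyperellipticity cannot be the distinguishing invariant and the general induction degenerates; the two strata $\Q D_2(3,3,-1,-1)$ and $\Q D_2(6,-1,-1)$ must then be checked by hand via their combinatorial (zippered-rectangle) models, where the separation is of a different nature. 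Assembling the lower bound, the upper bound, and these special cases yields the stated classification.
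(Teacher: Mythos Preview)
The paper does not prove this statement. Theorem~\ref{thm:lanneau} appears in the Background section as a quoted result of Lanneau, attributed to \cite{L1}, and is used only as a black box later (in the proofs of Theorems~\ref{thm:c1} and~\ref{thm:c2}, to guarantee that all components of $\Q_\l$ cover a single component of $\Q D_\l$ and hence share the same dimension). There is therefore no proof in the paper against which your proposal can be compared.

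For what it is worth, your outline is broadly faithful to the strategy in Lanneau's cited work: the lower bound via showing the hyperelliptic locus is a full-dimensional, closed-and-open sublocus; the upper bound via an inductive surgery scheme in the style of \cite{KZ}; and the appeal to \cite{L2} to rule out spin parity as a further invariant. But none of this is carried out, or needed, in the present paper.
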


For any $\l$,  $\Q_\l \to \Q D_\l$ is a covering map (with fiber $\Gamma_g$, the standard mapping class group of genus $g$). This theorem then tells us that in most cases, $\Q_\l$ must lie over a connected space.  

\section{Upper Bounds on Connected Components} \label{sec:up}
In this section we show that strata of the form $\Q_g(1^m, k_1^{n_1},...,k_l^{n_l})$, $m \ge g$, are connected, and strata of the form $\Q_g(2^{m},k_1^{n_1},...,k_l^{n_l})$, $m \ge g$ and $k_1,....,k_l$ all even, have at most $2^{2g}-1$ connected components. These correspond to the $2^{2g}-1$ line bundles, excluding $K$, that square to $K^2$.  

Following Kontsevich and Zorich in \cite{KZ}, for any finite sequence of positive integers $\l=(k_1,...,k_n)$ (where $k_i$ may equal $k_j$ and $\sum k_i=4g-4$), we define $\Q_g^{num}(k_1,...,k_n)$ or $\Q_\l^{num}$ to be the space of quadratic differentials with \textit{numbered} zeroes, such that the $i$th zero is of order $k_i$. $\Q_g^{num}(k_1,...,k_n)$ is a manifold and a finite cover of  $\Q_g(k_1,...,k_n)$ (if for all $i,j$ $k_i \ne k_j$, then $\Q_\l^{num}=\Q_\l$), and the advantage of this space is that we may distinguish between different zeroes of the same order. Since it is a covering map, any connected component of $\Q_g^{num}(k_1,...,k_n)$ will remain connected under projection to $\Q_g(k_1,...,k_n)$. Also, for any $\l$ define $\P \Q_\l^{num}$ to be the projectivization of $\Q_\l^{num}$ given by identifying all constant multiples of a particular quadratic differential. $\P \Q_\l^{num}$ will have the same number of components as $\Q_\l^{num}$; consequently, an upper bound on the number of connected components of $\P \Q_\l^{num}$ is also an upper bound on the number of connected components of $\Q_\l$.

Let $\l=(1^m,k_1^{n_1},...,k_l^{n_l})$ and note that no quadratic differential with zeroes of the pattern given by $\l$ can be the square of an Abelian differential, so $\Q_\l$ is the union over all $M \in \T_g$ of the subsets of $H^0(M,K^2)$ of sections with zeroes of the correct orders. We assume that $m=(4g-4-(\sum_{i=1}^l k_i n_i)) \ge g$. Define $Sym^n(M)$ to be the space of n-tuples of distinct unordered points on $m$ ($M^n/S_n$ minus the fat diagonal), and let $Sym^n$ the bundle over $\T_g$ with fiber $Sym^n(M)$ over each $M$. Similarly let $Pic^{n}(M)$ denote the space of line bundles of degree $n$ on $M$, and recall that this will be a $g$ dimensional Abelian variety for any $n$. Again let $Pic^n$ be the bundle over $\T_g$  with fiber $Pic^n(M)$ over each $M$, 
which is of dimension $g$ for any $n$. From classical algebraic geometry (see \cite{GH}, for example) we have the Abel-Jacobi map 
$$AJ:Sym^n(M) \to Pic^n(M)$$
which is given by thinking of a set of $n$ points on $M$ as a divisor, and taking the divisor to its associated line bundle (i.e. the line bundle for which the points are the zeroes of a holomorphic section). We may extend this map to a map $Sym^n \to Pic^n$, which we will again call the Abel-Jacobi map. 

We also have the following map:
$$\phi:\P \Q^{num}_\l \to Sym^{n_1} \times... \times Sym^{n_l} \times Sym^{m-g} \times Sym^g$$
where the products are fiberwise (in other words, we define $Sym^{n_1} \times Sym^{n_2}$ to be the space with $Sym^{n_1}(M) \times Sym^{n_2}(M)$ over every $M \in \T_g$). The map takes $(M,q)$ to the zeroes of $q$, where the zeroes of order $k_i$ are taken to $Sym^{n_i}$ and the zeroes of order 1 are split into two parts, with the first $m-g$ mapped to $Sym^{m-g}$. This is well-defined because we are working in $ \P \Q^{num}_\l$. We will be especially interested in the restriction: 
$$\phi':\P \Q^{num}_\l \to Sym^{n_1} \times... \times Sym^{n_l} \times Sym^{m-g} $$
This is because $\P \Q^{num}_\l$ has dimension $2g-3+\sum_1^ln_i+m$, as does $Sym^{n_1} \times... \times Sym^{n_l} \times Sym^{m-g}$. For the rest of the section we will let $S=Sym^{n_1} \times... \times Sym^{n_l} \times Sym^{m-g}$. 

\begin{theorem} \label{thm:c1}
Let $g \ge 2$. For $\l=(1^m, k_1^{n_1},...,k_l^{n_l})$ and $m \ge g$, $\Q_\l$ is connected. 
\end{theorem}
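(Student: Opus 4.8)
The plan is to use the map $\phi'\colon\P\Q^{num}_\l\to S$ (after replacing $\Q_\l$ by a convenient finite cover) to show that $\phi'$ restricts to a homeomorphism over a dense open connected subset of $S$, and then conclude by taking closures. First I would analyse the fibres of $\phi'$ by Riemann-Roch. A point of $S$ records a surface $M\in\T_g$ together with the zeroes of $q$ of orders $k_1,\dots,k_l$ and $m-g$ of the simple zeroes; let $D_0$ be the corresponding effective divisor, of degree $\sum k_i n_i+(m-g)=3g-4$. A quadratic differential lying over this point is a section $q\in H^0(M,K^2)$ with $\mathrm{div}(q)=D_0+D$, where $D$ is the effective divisor of degree $g$ given by the remaining simple zeroes; such a $q$ exists, and is unique up to scale once $D$ is fixed, exactly when $\mathcal O(D)\cong K^2(-D_0)$. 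Since $K^2(-D_0)$ has degree $g$, equal to the genus of $M$, Riemann-Roch gives $h^0(K^2(-D_0))\ge1$, so $D$ always exists; and for generic $(M,D_0)$ the bundle $K^2(-D_0)$ is non-special, so $h^0=1$ and $D$ is the unique effective divisor in its class. A genericity argument then shows that for generic $(M,D_0)\in S$ this $D$ is reduced and disjoint from $D_0$, so the resulting $q$ genuinely lies in $\Q_\l$ (it is automatically not the square of an Abelian differential, since $\l$ has simple zeroes). Let $U\subseteq S$ be the locus where $h^0(K^2(-D_0))=1$ and $D$ is in general position; it is open and nonempty, hence dense, and since $S$ fibres over the connected space $\T_g$ with connected fibres $Sym^{n_1}(M)\times\cdots\times Sym^{m-g}(M)$, both $S$ and $U$ are connected.

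Next I would pass to the finite cover $\widehat\Q_\l$ of $\Q_\l$ obtained by distinguishing a $g$-element subset of the simple zeroes (the future support of $D$); since $\widehat\Q_\l\to\Q_\l$ is surjective, a bound on the number of components of its projectivization $\widehat{\P\Q}_\l$ bounds that of $\Q_\l$. On $\widehat{\P\Q}_\l$ the lift of $\phi'$, still denoted $\phi'$, is injective over $U$, because over $(M,D_0)\in U$ the divisor $D$, hence $q$ up to scale, hence the distinguished subset, is determined; and it is surjective onto $U$ by the first step. An injective holomorphic map between complex manifolds of the same dimension is a homeomorphism onto an open subset, so $(\phi')^{-1}(U)$ is carried homeomorphically onto $U$, and is therefore connected.

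It then remains to show that $(\phi')^{-1}(U)$ is dense in $\widehat{\P\Q}_\l$, i.e.\ that $(\phi')^{-1}(S\setminus U)$ has empty interior, for which it suffices to bound its dimension strictly below $\dim S=\dim\widehat{\P\Q}_\l$. Over a point of $S\setminus U$ with $h^0(K^2(-D_0))=1$ there is no quadratic differential in $\Q_\l$, so the fibre is empty; over the locus $B_r=\{(M,D_0): h^0(K^2(-D_0))\ge r+1\}$, $r\ge1$, the fibre is an open subset of the linear system $|K^2(-D_0)|\cong\P^r$. Hence it suffices to show $\mathrm{codim}_S B_r>r$ for every $r\ge1$. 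By Serre duality $h^0(K^2(-D_0))\ge r+1$ is equivalent to $h^0(K^{-1}(D_0))\ge r$, i.e.\ to the degree-$(g-2)$ class $\mathcal O(D_0)\otimes K^{-1}$ lying in the Brill-Noether locus $W^{\,r-1}_{g-2}\subseteq Pic^{g-2}(M)$; combining the dimension of this locus with standard dimension counts for the relevant linear systems and Abel-Jacobi maps yields the estimate. Granting this, $(\phi')^{-1}(U)$ is a connected dense open subset of the manifold $\widehat{\P\Q}_\l$, so $\widehat{\P\Q}_\l$, and therefore $\Q_\l$, is connected.

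The hard part is this last codimension estimate. The subtlety is that $S$ does not parametrise arbitrary divisors of degree $3g-4$, but only divisors with prescribed multiplicities $k_i$ together with $m-g$ simple points, so the Brill-Noether count must be carried out for the image of $S$ in $Sym^{3g-4}$, and one must check that $B_r$ meets this constrained family in codimension $>r$ rather than dropping dimension too slowly. This is precisely where the hypothesis $m\ge g$ (which, beyond making the splitting of the simple zeroes possible, forces $S$ to carry enough moduli) and $g\ge2$ are used; the small-$r$ case, where $B_r$ already has small codimension in $Sym^{3g-4}$, is the delicate one.
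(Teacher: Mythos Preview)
Your overall strategy---pass to a numbered cover, study the map $\phi'$ to $S$, identify an open set on which $\phi'$ is a bijection via the condition $h^0(K^2(-D_0))=1$---is exactly what the paper does. The divergence is in how you establish that $(\phi')^{-1}(U)$ is dense in $\P\Q_\l^{num}$.

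You attempt to prove this by a direct Brill--Noether codimension estimate $\operatorname{codim}_S B_r>r$, which you correctly identify as delicate (since $S$ parametrises divisors with constrained multiplicities, not arbitrary degree-$(3g-4)$ divisors), and which you do not actually carry out. This is a genuine gap: the estimate is not obvious, and you have not shown it holds for the constrained family. The paper sidesteps this entirely. It observes that by Lanneau's classification (Theorem~\ref{thm:lanneau}), $\Q D_\l$ is connected for every $\l$ of the form in the statement; since $\Q_\l\to\Q D_\l$ is a covering, every component of $\P\Q_\l^{num}$ has the same dimension. Once equidimensionality is known, the locus $\{h^0(K^2/L_s)\ge 2\}$, being a proper closed analytic subset (it is not everything, by the easy Riemann--Roch fact that a generic degree-$g$ bundle has $h^0=1$), automatically has positive codimension in every component. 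No fibre-dimension bookkeeping over the $B_r$ is needed.

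So the missing idea in your proposal is to invoke the connectedness of the stratum over moduli space to get equidimensionality for free, rather than fighting the Brill--Noether count. Your route could in principle give a self-contained proof independent of Lanneau's theorem, but as written it is incomplete. A minor point: $m\ge g$ is already needed just to split off $m-g\ge 0$ simple zeroes and define $\phi'$, not only in your codimension estimate; and your cover $\widehat\Q_\l$ (marking a $g$-element subset of simple zeroes) works just as well as the paper's $\Q_\l^{num}$ (numbering all zeroes) for this purpose.
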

By the discussion above it suffices to prove that $\P \Q^{num}_\l$ is connected. To prove this we will need the following lemma. 

\begin{lemma} \label{lem:c1}
There exists $U \subset \P \Q_\l^{num}$ such that $\P\Q_\l^{num} \bs U$ is of positive codimension, $\phi'$ is injective on $U$, and $\phi'(U)$ is connected. 
\end{lemma}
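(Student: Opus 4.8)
The plan is to build $U$ by reverse-engineering the fibers of $\phi'$. Given a point in $S$—that is, a curve $M$ together with a divisor $D_1+\dots+D_l+D_0$ where $D_i$ is a reduced divisor of degree $n_i$ (for $i \ge 1$) and $D_0$ is reduced of degree $m-g$—asking for a $(M,q) \in \P\Q_\l^{num}$ in the fiber over this point means finding a section of $K^2$ vanishing to order exactly $k_i$ along $D_i$, to order exactly $1$ along $D_0$, and to order $1$ at $g$ further points. Equivalently, writing $E = k_1 D_1 + \dots + k_l D_l + D_0$ (a divisor of degree $4g-4-g = 3g-4$), we need a holomorphic section of $K^2(-E)$, a line bundle of degree $g$, whose remaining $g$ zeroes are simple and disjoint from $E$; the image point in $Sym^g$ is then those $g$ zeroes. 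So the fiber of $\phi'$ over a point of $S$ is identified (generically) with $\P H^0(M, K^2(-E))$, and $\phi$ refines this to the choice of the additional $g$ zeroes. The first step is therefore to show that for $(M,D)$ in a set whose complement has positive codimension in $S$, the bundle $K^2(-E)$ has $h^0 = 1$: since $\deg K^2(-E) = g$, Riemann–Roch gives $h^0 - h^1 = 1$, so I need $h^1(M, K^2(-E)) = h^0(M, K^{-1}(E)) = 0$, and $K^{-1}(E)$ has degree $3g-4-(2g-2) = g-2 < g$... wait, I must be careful: $h^0$ of a degree-$(g-2)$ bundle is generically $0$ but can jump, and this is exactly the locus I want to cut out. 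The key input is that the locus in $Pic^{g-2}$ (or its pullback to $S$ via the appropriate Abel–Jacobi-type construction) where $h^0 > 0$ is a proper subvariety (a translate of the theta divisor $W_{g-2}$ inside $\mathrm{Pic}^{g-2}(M)$, which has dimension $g-1 < g$), hence of positive codimension; pulling back under $\phi$ and the divisor-to-line-bundle maps keeps it of positive codimension, provided those maps are sufficiently non-degenerate, which is where I would use $m \ge g$ (it gives enough free simple zeroes to move the relevant line bundle around freely in $Pic^{g-2}$, i.e., the composite $S \to Pic^{g-2}$ is dominant/submersive generically).

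With that in hand I would take $U$ to be the locus of $(M,q) \in \P\Q_\l^{num}$ for which (i) $h^0(M, K^2(-E)) = 1$, and (ii) all zeroes of $q$ are simple except the forced multiplicities, i.e., $q$ genuinely lies in the open stratum and the $g$ ``extra'' simple zeroes are distinct from each other and from $E$ (this is automatic on $\P\Q_\l^{num}$ since we are in the stratum, but I need them distinct from $E$, which is an open dense condition). On $U$, condition (i) forces the fiber of $\phi'$ to be a single point of $\P H^0(M, K^2(-E))$, so $\phi'$ is injective on $U$; and then $\phi(U)$ is precisely the graph of the map ``$(M,D) \mapsto$ the $g$ remaining zeroes of the unique section,'' so $\phi'(U)$ is the image in $S$ of that graph, an open subset of $S$. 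Since $S$ is a bundle over the connected space $\T_g$ with fibers $Sym^{n_1}(M) \times \dots \times Sym^{m-g}(M)$, and each $Sym^{n}(M)$ is connected (it is $M^n$ minus the diagonals, modulo $S_n$, which is connected for $M$ connected of positive genus), $S$ is connected; and $\phi'(U)$ is $S$ minus the positive-codimension bad locus, hence still connected. Finally I must check $\P\Q_\l^{num} \setminus U$ has positive codimension: it is the union of the preimage under $\phi$ of the bad locus in $S$ (positive codimension, and $\phi$ restricted to where it is a local iso doesn't drop this) together with the locus where some ``extra'' zero collides with $E$ or where $h^0$ jumps, all of which are proper closed conditions; since $\dim \P\Q_\l^{num} = \dim S$ and $\phi'$ is generically finite (indeed generically injective, by the above), preimages of positive-codimension sets are positive-codimension.

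The main obstacle I expect is the genericity statement in the first step: rigorously showing that the ``jumping locus'' $\{h^0(K^2(-E)) > 1\}$, equivalently $\{h^0(K^{-1}(E)) > 0\}$, pulls back to a \emph{proper} (positive-codimension) subvariety of $\P\Q_\l^{num}$, rather than possibly being everything. This requires knowing that as $(M,q)$ ranges over the stratum, the degree-$(g-2)$ line bundle $K^{-1}(E) = K^{-1}(k_1 D_1 + \dots + D_0)$ actually moves in a positive-dimensional (indeed, dominant-to-$Pic^{g-2}$ after accounting for the Teichmüller directions) family and is not constrained to lie on the theta divisor $W_{g-2}$; here the hypothesis $m \ge g$ is doing real work, since the $m$ simple zeroes (of which $m-g$ are ``recorded'' in $D_0$) furnish enough independent divisorial directions to sweep out $Pic^{g-2}(M)$, and a dimension count shows a generic such bundle is not effective. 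I would make this precise by exhibiting one explicit $(M,q)$ in the stratum with $h^0(K^2(-E)) = 1$ (for instance, a suitable flat-structure / Jenkins–Strebel example, or a degeneration argument), from which semicontinuity of $h^0$ propagates the condition to a dense open set. Everything else is the routine bookkeeping of dimensions and of openness sketched above.
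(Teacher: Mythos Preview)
Your proposal is correct and follows essentially the same approach as the paper: define $U$ by the condition $h^0(K^2/L_s)=1$ (your $K^2(-E)$ is exactly the paper's $K^2/L_s$), use uniqueness of the section to get injectivity of $\phi'$ on $U$, and argue that the complement of $\phi'(U)$ in $S$ has positive codimension. You spell out the genericity step (via Serre duality and the Brill--Noether locus $W_{g-2}\subset Pic^{g-2}$; note its dimension is $g-2$, not $g-1$) more carefully than the paper does, and you correctly flag that one must check the bad locus does not fill out the whole stratum---a point the paper passes over with a one-line appeal to Riemann--Roch.
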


\begin{proof}
We construct $U \subset \P \Q^{num}_\l$ as follows. Pick an element of $\P \Q^{num}_\l$ and divide its zeroes into two divisors: the $n_1+...+n_l$ zeroes of higher order plus the first $m-g$ zeroes of order 1, and the last $g$ zeroes of order 1. These two divisors give us holomorphic sections, $s$ of some line bundle $L_s \in Pic^{3g-4}$ and $s'$ of $K^2/L_s \in Pic^g$ (we require $g \ge 2$ so $3g-4 > 0$). Let $U$ be the subset of $\P \Q^{num}_\l$ such that $h^0(M, K^2/L_s)=1$. This is a generic condition (Riemann-Roch implies that for a line bundle of degree $d$ the dimension of its space of holomorphic sections will be $d-g+1$) so $\P \Q^{num}_\l \bs U$ is of positive codimension. 

$\phi'$ is injective on $U$ because if two elements of $\P \Q^{num}_\l$ get mapped to the same element of $S$, they get mapped to the same $L \in Pic^{3g-4}$ by a restriction of the Abel-Jacobi map to $S$. Then they also get mapped to the same element of $Pic^g$, $K^2/L$, by a restriction of the Abel-Jacobi map to $Sym^g$. By assumption $K^2/L$ has only one holomorphic section up to scaling; thus our original two elements of $\P \Q^{num}_\l$ must have been the same. 

To show $\phi'(U)$ is connected it suffices to show that $S \bs \phi'(U)$ is of positive complex codimension in $S$ (because $S$ is a connected complex manifold). However, any element of  $S$ gives us a divisor $D$, which (with zeroes weighted appropriately) gives us a section, $s_D$, of a line bundle $L_D \in Pic^{3g-4}$. $K^2/L_D$ will generically have a section, $s_{D'}$ with $g$ distinct zeroes, $D'$, that are also distinct from those of $D$, and also generically $h^0(M, K^2/L_D)=1$. When this is the case $s_{D} \otimes s_{D'}$ is an element of $U$ and $D$ is in the image of $\phi'$.
\end{proof}

\begin{proof}[Proof of Theorem]
Recall that a holomorphic injective map has a holomorphic inverse. Thus the fact that $\phi'$ is injective on $U$ and $\phi'(U)$ is connected implies that $\phi'^{-1}(\phi'(U))=U$ is also connected. This implies that there can be only one full-dimensional connected component of $\P\Q_\l^{num}$.

However, all the components of $\P\Q_\l^{num}$ must be of the same dimension, because by Theorem \ref{thm:lanneau} $\Q D_\l$ has only one component for any of $\l$ as in the statement of the theorem, and all components of $\Q_\l$ must cover it. Thus there can be only one connected component of $\P\Q_\l^{num}$.
\end{proof}

Now let $\l=(2^m,k_1^{n_1},..., k_l^{n_l})$, with $m \ge g$ and $k_1,...,k_l$ even, and let $n=n_1+...+n_l$. A similar argument to the one above will allow us to put an upper bound on the number of connected components of $\Q_l$. We again have the following maps, defined as above:
$$\phi:\P\Q^{num}_\l \to Sym^{n_1} \times... \times Sym^{n_l} \times Sym^{m-g} \times Sym^g$$
$$\phi':\P\Q^{num}_\l \to Sym^{n_1} \times... \times Sym^{n_l} \times Sym^{m-g} $$

We cannot use precisely the same argument as above because the double zeroes do not form a subset of full-dimension of the sections of some line bundle. However, for a particular $M$ recall that $K_M^2$ has $2^{2g}$ square roots in $Pic^{2g-2}(M)$. Call them $K_1, K_2,...,K_{2^{2g}}=K_M$. Since $Pic^{2g-2}$ is a trivial bundle over $\T_g$ a choice of labeling of these bundles over a single $M$ yields a consistent choice of labeling over all other surfaces; thus, when we refer to $K_i$ we will mean a section of $Pic^{2g-2}$. Let $(\P \Q_\l)_{K_i} = \{ q | \sqrt{q} \textrm{ is a section of } K_i \}$. The set $\{ q | \sqrt{q} \textrm{ is a section of } K \}$ consists of squares of Abelian differentials, and it is customary to consider them separately, so we have $2^{2g} - 1$ subsets $(\P \Q_\l)_{K_i}$, all of full dimension in $\P\Q_\l$.

\begin{theorem} \label{thm:c2}
$\Q_\l$ has at most $2^{2g}-1$ connected components.
\end{theorem}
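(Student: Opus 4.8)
The plan is to run the argument of Lemma~\ref{lem:c1} and Theorem~\ref{thm:c1} separately on each of the $2^{2g}-1$ pieces $(\P\Q_\l^{num})_{K_i}$, $K_i\neq K$, with the degree-$(2g-2)$ line bundle $K_i$ playing the role that $K$ plays there. First one checks that these pieces are open and closed in $\P\Q_\l^{num}$, so that it suffices to bound the number of components of each. Since every zero of a $q\in\Q_\l$ has even order, $q$ has a square root $\omega$, a section of $L_q:=\mathcal{O}_M(\tfrac12\operatorname{div}(q))\in Pic^{2g-2}$; as $(M,q)$ moves in $\P\Q_\l^{num}$ the zeros of $q$ move continuously, so $L_q$ does too, and since $L_q^{\otimes 2}\cong K_M^2$ it takes values in the finite --- hence discrete --- set $\{K_1,\dots,K_{2^{2g}}\}$ of square roots of $K^2$, which by the trivialization of $Pic^{2g-2}$ over $\T_g$ are disjoint global sections. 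Thus $L_q$ is locally constant; as no $q\in\Q_\l$ is a square of an Abelian differential, $L_q\neq K$, so $\P\Q_\l^{num}=\bigsqcup_{K_i\neq K}(\P\Q_\l^{num})_{K_i}$ is a disjoint union of $2^{2g}-1$ open and closed pieces.

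Now fix $K_i\neq K$. For $(M,q)$ in this piece put $\omega=\sqrt q\in H^0(M,K_i)$; halving the orders of the zeros of $q$, the divisor of $\omega$ has $m$ simple zeros and $n_j$ zeros of order $k_j/2$, of total degree $2g-2$. As in Lemma~\ref{lem:c1} I would use the numbering to write $\operatorname{div}(\omega)=D_1+D_2$, with $D_1$ the higher-order zeros together with the first $m-g$ simple zeros (so $\deg D_1=g-2\ge0$, using $g\ge2$) and $D_2$ the remaining $g$ simple zeros (which exist since $m\ge g$). Let $U_i\subset(\P\Q_\l^{num})_{K_i}$ be the locus where $h^0(M,K_i/\mathcal{O}(D_1))=1$; since $\deg(K_i/\mathcal{O}(D_1))=g$, Riemann--Roch makes this generic, so the complement has positive codimension. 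On $U_i$ the restriction of $\phi'$ is injective: it determines $M$ and the divisor $D_1$, hence $\mathcal{O}(D_1)$ and, up to a scalar, the section with divisor exactly $D_1$; then it determines $K_i/\mathcal{O}(D_1)$ and, by the defining condition of $U_i$ and up to a scalar, its section; hence $\omega$, and thus $q=\omega^2$, up to a scalar. And $\phi'(U_i)$ is connected because $S\setminus\phi'(U_i)$ has positive complex codimension in the connected complex manifold $S$: a general point of $S$ gives a divisor $D_1$ for which $K_i/\mathcal{O}(D_1)$ has a one-dimensional space of sections, a section of which has $g$ distinct zeros avoiding $\operatorname{supp}D_1$, and the square of the product of that section with the canonical section of $\mathcal{O}(D_1)$ is an element of $U_i$ lying over the point.

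The conclusion then follows as in the proof of Theorem~\ref{thm:c1}. The map $\phi'|_{U_i}$ is an injective holomorphic map between complex manifolds of the same dimension $2g-3+m+n$, hence a biholomorphism onto the open set $\phi'(U_i)$, which is connected; so $U_i$ is connected. By Theorem~\ref{thm:lanneau}, $\Q D_\l$ is connected --- having at least $g\ge2$ zeros, all of positive even order, it is not one of the exceptional strata listed there --- so every connected component of $\P\Q_\l^{num}$, in particular of $(\P\Q_\l^{num})_{K_i}$, has full dimension. The dense connected set $U_i$ therefore meets every component of $(\P\Q_\l^{num})_{K_i}$, which thus has at most one. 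Summing over the $2^{2g}-1$ bundles $K_i\neq K$ bounds the number of components of $\P\Q_\l^{num}$, and so of $\Q_\l$, by $2^{2g}-1$.

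The step I expect to be the main obstacle is, exactly as in Lemma~\ref{lem:c1}, the genericity input behind the connectedness of $\phi'(U_i)$: one must know that a general line bundle of degree $g$ on a genus-$g$ curve has a one-dimensional space of sections, that its essentially unique section has $g$ \emph{distinct} zeros (Riemann--Roch together with the Abel--Jacobi map $Sym^g\to Pic^g$ being generically injective), and that those zeros can furthermore be taken disjoint from $\operatorname{supp}D_1$ --- all while tracking how the relevant loci sit inside $S$. The degenerate case $g=2$, which forces $\l=(2^2)$, $D_1=\emptyset$, $S=\T_g$, should be checked separately but causes no difficulty; indeed for $K_i\neq K$ one has $h^0(K_i)=1$ over all of $\T_g$.
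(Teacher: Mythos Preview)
Your proposal is correct and follows essentially the same approach as the paper: decompose $\P\Q_\l^{num}$ into the pieces $(\P\Q_\l^{num})_{K_i}$, run the Lemma~\ref{lem:c1}/Theorem~\ref{thm:c1} argument on each piece with $K_i$ in place of $K^2$ (the paper packages this as a separate lemma), and then invoke Lanneau's Theorem~\ref{thm:lanneau} to rule out lower-dimensional components. Your version is in fact slightly more careful than the paper's, in that you justify explicitly why the $(\P\Q_\l^{num})_{K_i}$ are open and closed, and you flag the degenerate case $g=2$, $\l=(2^2)$.
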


To prove this we show that each of the $(\P\Q_\l)_{K_i}$ is connected, and use the following lemma. 

\begin{lemma}
There exists $U_i \subset (\P \Q_\l^{num})_{K_i}$ such that $(\P\Q_\l^{num})_{K_i} \bs U_i$ is of positive codimension, $\phi'$ is injective on $U_i$, and $\phi'(U_i)$ is connected. 
\end{lemma}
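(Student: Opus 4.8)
The plan is to mimic the proof of Lemma~\ref{lem:c1} as closely as possible, with $K_i$ playing the role that $K^2$ played there. Fix one of the square-root bundles $K_i$ (with $i \ne 2^{2g}$, so that we are not in the Abelian case). For $(M,q) \in (\P\Q_\l^{num})_{K_i}$ we have $\sqrt{q}$ a holomorphic section of $K_i$ whose divisor is $\sum (k_j/2) \cdot(\text{zeros of type } k_j) + \sum 1\cdot(\text{double zeros of }q)$; here every $k_j$ is even, so this is a genuine effective divisor of degree $g-1$ consisting of the $n$ higher-order zeros (with multiplicity $k_j/2$) together with the $m$ points that are double zeros of $q$. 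Split the $m$ simple zeros of $\sqrt{q}$ into the first $m-g$ and the last $g$; this gives a section $s$ of some $L_s \in \mathrm{Pic}^{g-1}$ with divisor the $n$ higher-order zeros plus the first $m-g$ simple ones, and a section $s'$ of $K_i/L_s \in \mathrm{Pic}^g$ with divisor the last $g$ simple zeros. Define $U_i$ to be the locus where $h^0(M, K_i/L_s) = 1$; by Riemann--Roch a degree-$g$ line bundle generically has a one-dimensional space of sections, so $(\P\Q_\l^{num})_{K_i}\bs U_i$ has positive codimension.

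Next I would check injectivity of $\phi'$ on $U_i$ exactly as before: two elements of $(\P\Q_\l^{num})_{K_i}$ with the same image in $S$ determine the same divisor of degree $g-1$ on the respective surface (the higher-order zeros with multiplicity $k_j/2$ plus the first $m-g$ doubled simple zeros), hence the same $L_s \in \mathrm{Pic}^{g-1}$ via a restriction of the Abel--Jacobi map, hence the same $K_i/L_s \in \mathrm{Pic}^g$; on $U_i$ this bundle has a unique section up to scale, so the last $g$ simple zeros are forced, and the two quadratic differentials coincide in $\P\Q_\l^{num}$. Note this uses crucially that we have fixed $K_i$: the map from a divisor back to a line bundle is only well-defined once we know which square root we are landing in.

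To see $\phi'(U_i)$ is connected I would again argue that $S \bs \phi'(U_i)$ has positive complex codimension in the connected complex manifold $S$. Given a generic point of $S$, i.e. a divisor $D$ of the appropriate shape, weight it to get an effective degree-$(g-1)$ divisor and hence a section $s_D$ of some $L_D \in \mathrm{Pic}^{g-1}$; tensoring with $K_i/L_D$, for generic $D$ the bundle $K_i/L_D$ has $h^0 = 1$ and its section $s_{D'}$ has $g$ distinct zeros disjoint from $D$, so $s_D \otimes s_{D'}$ squares to an element of $\Q_\l^{num}$ whose square root lies in $K_i$ — that is, an element of $U_i$ mapping to $D$. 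This shows $\phi'(U_i)$ is the complement of a subvariety of positive codimension, hence connected, and completes the proof of the lemma; the theorem then follows as in Theorem~\ref{thm:c1}, since a holomorphic injection with connected connected image forces $\phi'^{-1}(\phi'(U_i)) = U_i$ to be connected, so each $(\P\Q_\l^{num})_{K_i}$ has a unique full-dimensional component, and Theorem~\ref{thm:lanneau} rules out components of smaller dimension.

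The step I expect to be the main obstacle is verifying that every degree count and every ``generic'' claim survives the passage from $K^2 \in \mathrm{Pic}^{4g-4}$ to $K_i \in \mathrm{Pic}^{2g-2}$: the divisor of $\sqrt q$ has degree $g-1$ rather than $4g-4$, the two pieces land in $\mathrm{Pic}^{g-1}$ and $\mathrm{Pic}^g$ rather than $\mathrm{Pic}^{3g-4}$ and $\mathrm{Pic}^g$, and one must confirm that the dimension of $(\P\Q_\l^{num})_{K_i}$ really does match $\dim S$ so that $\phi'$ is a map between equidimensional spaces and the codimension arguments are meaningful. One should also be slightly careful that the locus where $\sqrt q$ exists with the prescribed divisor is genuinely cut out inside each $(\P\Q_\l^{num})_{K_i}$ and that the ``disjointness'' genericity (the last $g$ simple zeros avoiding the others) is compatible with staying in $\Q_\l^{num}$ rather than a smaller stratum.
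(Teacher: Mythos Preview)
Your approach is essentially identical to the paper's: regard $\sqrt{q}$ as a section of $K_i$, split its divisor into the $n$ higher-order zeros plus the first $m-g$ simple zeros on one side and the last $g$ simple zeros on the other, define $U_i$ by the condition $h^0(M,K_i/L_s)=1$, and run the same three checks (positive codimension of the complement, injectivity of $\phi'$, connectedness of the image) exactly as in Lemma~\ref{lem:c1}.

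There is, however, a small arithmetic slip you should fix. The divisor of $\sqrt{q}$ has degree $\deg K_i = 2g-2$, not $g-1$: indeed $\sum_j n_j(k_j/2) + m = \tfrac{1}{2}\bigl(\sum_j n_j k_j + 2m\bigr) = \tfrac{1}{2}(4g-4) = 2g-2$. Consequently, after removing the last $g$ simple zeros, the remaining divisor has degree $2g-2-g = g-2$, so $L_s \in \mathrm{Pic}^{g-2}$ (not $\mathrm{Pic}^{g-1}$) and $K_i/L_s \in \mathrm{Pic}^g$ as required. You flagged the degree counts as the main thing to verify, and once this correction is made the argument goes through exactly as you outline and matches the paper's proof.
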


\begin{proof}
The proof is very similar to that of Lemma \ref{lem:c1}. We construct $U_i$ as follows. Pick an element of $(\P \Q^{num}_\l)_{K_i}$, which we regard as projective section of $K_i$ with the extra data of the ordering of the zeroes (now of order $(1^m, (k_1/2)^{n_1},...,(k_l/2)^{n_l})$). Divide its zeroes into two divisors: the $n$ zeroes of higher order plus the first $m-g$ zeroes of order 1, and the last $g$ zeroes of order 1. These two divisors give us holomorphic sections, $s$ of some line bundle $L_s \in Pic^{g-2}$ and $s'$ of $K_i/L_s \in Pic^g$. Let $U_i$ be the subset of $(\P \Q^{num}_\l)_{K_i}$ such that $h^0(M, K_i/L_s)=1$. This is a generic condition so $(\P \Q^{num}_\l)_{K_i} \bs U_i$ is of positive codimension. 

$\phi'$ is injective on $U_i$ because if two elements of $(\P \Q^{num}_\l)_{K_i}$ get mapped to the same element of $S$, they get mapped to the same $L$ by the Abel-Jacobi map. Then they also get mapped to the same element of $Pic^g$, $K_i/L$, by the restriction of the Abel-Jacobi map to $Pic^g$. By assumption $K_i/L$ has only one holomorphic section up to scaling; thus our original two elements of $\P \Q^{num}_\l$ must have been the same. 

To show $\phi'(U_i)$ is connected it suffices to show that $S \bs \phi'(U_i)$ is of positive codimension in $S$. However, any element of $S$ gives us a divisor $D$, which gives us the section, $s$, of a line bundle $L_s \in Pic^{g-2}$. $K_i/L_s$ will generically have a section, $s'$, with $g$ distinct zeroes that are distinct from those of $D$, and also generically $h^0(M, K_i/L)=1$. When this is the case $s \otimes s'$ is an element of $U_i$ and $D$ is in the image of $\phi'$.
\end{proof}

\begin{proof}[Proof of Theorem]
As in the proof of Theorem \ref{thm:c1} the fact that $\phi'$ is injective on $U_i$ and $\phi'(U_i)$ is connected implies that $\phi'^{-1}(\phi'(U_i))=U_i$ is also connected. This implies that there can be at most $2^{2g}-1$ full-dimensional connected components of $\P\Q_\l^{num}$.

However, all the components of $\P\Q_\l^{num}$ must be of the same dimension, because by Theorem \ref{thm:lanneau} $\Q D_\l$ has only one component for any of $\l$ as in the statement of the theorem, and all $\Q_\l$ must cover it. Thus there can be only $2^{2g}-1$ connected components of $\P\Q_\l^{num}$.
\end{proof}

An analog of Theorem \ref{thm:c2} may be used to show that $\Q_\l$ has at most $3^{2g}$ components for $\l=(3^m, k_1^{n_1},...,k_l^{n_l})$, $m \ge g$ and $k_i$ divisible by 3. For integers larger than 3 this technique no longer applies, as $k^m$ will be greater than $4g-4$ for $m \ge g$.

Finally, we note that one would expect the upper bound of Theorem \ref{thm:c2} to be satisifed. This is because the squares of sections of a particular $K_i$ in any connected component will be a closed set in that component (the limit of a sequence of sections of a particular bundle will still be a section of that bundle), and a connected set cannot be the union of multiple disjoint closed subsets. In the next section we construct a geometric invariant of a connected component, which gives another proof of this fact. 


\section{Lower Bounds} \label{sec:low}
In this section we first define an invariant of the homology class of a curve on a surface, and show that for $\l=(k_1,..,k_n)$ with all $k_i$ even, it defines an invariant of a connected component of $\Q_\l$. We then use the invariant to show that for these $\l$ $\Q_\l$ has multiple connected components. 

Let $\l=(k_1,...,k_n)$ with all $k_i$ even. For $(M,q) \in \Q_\l$ let $\pi: \tM \to M$ be the canonical double cover, and let $\omega^2=\pi^*(q)$, with $\omega$ an Abelian differential on $\tM$. Let $P=\{p_1,...,p_n\}$ denote the set of zeroes of $q$. Since $q$ has no odd zeroes the cover is not ramified. Thus $\pi: \tM \to M$ is a  two-sheeted covering space and we have a monodromy representation: $$Ga:\pi_1(M) \to \Z_2$$
Since $\Z_2$ is Abelian, the representation factors through $\pi_1(M) \to H_1(M, \Z) \to \Z_2$, and we will refer to the second map as $Ga$ as well. 
If a cycle, $[\g] \in H_1(M, \Z)$, is represented by a smooth, simple, connected, closed curve, $ \g \subset M \bs P$, then we may calculate the monodromy geometrically as follows. Consider the flat metric on $M \bs P$ induced by $q$, and the corresponding connection on $TM$. $[\g]$ is in the kernel of of $Ga$ if and only if parallel transport along $\g$  brings a vector, $v$, back to itself. (The other option is that $v$ is brought back to $-v$.) We refer to the map as $Ga$ because it corresponds to the degree of a generalized version of the Gauss map on $M$, (mod 2).  Notice that one may not not use this as an invariant of homology classes on quadratic differentials with odd zeroes because it is possible to create two smooth simple curves in the same class $H_1(M, \Z)$ that are in different classes in $H_1(M \bs P, \Z)$, differ by a small loop around an odd zero, and thus have different monodromies. Also notice that $Ga$ is a group homomorphism, and $Ga([\gamma + \eta]) = Ga([\g])+ Ga([\eta])$.

\begin{lemma} \label{lem:odd}
Let $(M,q)$ be a quadratic differential with all even zeroes. Any basis for $H_1(M, \Z)$ contains at least one cycle with non-zero monodromy. 
\end{lemma}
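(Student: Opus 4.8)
The plan is to reduce the lemma to the single fact that the monodromy homomorphism $Ga\colon H_1(M,\Z)\to\Z_2$ is not identically zero. Granting that, the lemma is immediate: if $\{e_1,\dots,e_{2g}\}$ is a basis of $H_1(M,\Z)\cong\Z^{2g}$ and $Ga(e_i)=0$ for all $i$, then, since $Ga$ is a group homomorphism and the $e_i$ generate $H_1(M,\Z)$, the map $Ga$ would vanish on every cycle, contrary to $Ga\not\equiv 0$. Hence some $e_i$ has non-zero monodromy.

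So the whole content is that $Ga\not\equiv 0$, and I would deduce this from the construction of the canonical double cover. The kernel of $Ga\colon\pi_1(M)\to\Z_2$ is the subgroup (of index $2$, or of index $1$ if $Ga$ is trivial) corresponding to the double cover $\pi\colon\tM\to M$: by construction $\pi$ is the minimal cover on which the line field determined by $q$ becomes orientable, equivalently on which $\pi^*(q)$ is the square of an Abelian differential. If $Ga$ were the zero map, the corresponding cover would be the trivial, disconnected one, $M\sqcup M$; but then ``$\pi^*(q)$ is a global square'' says precisely that $q$ itself is the square of an Abelian differential on $M$. Since $(M,q)$ is a genuine quadratic differential --- elements of $\Q_\l$ are by convention not squares of Abelian differentials, which is exactly the hypothesis under which the canonical double cover is defined for them --- this is impossible. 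Therefore $Ga$ is onto $\Z_2$.

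Phrased through the flat geometry already set up above: $Ga([\g])$ records whether parallel transport around $\g$ (for the flat connection induced by $q$) returns a tangent vector $v$ to itself or to $-v$, and it descends from $\pi_1(M\bs P)$ to $\pi_1(M)$ precisely because every zero is even, so the cone angle $(k_i+2)\pi$ at $p_i$ is a multiple of $2\pi$ and contributes trivial local holonomy. If $Ga$ vanished on every loop of $M$, the oriented line field induced by $q$ on $M\bs P$ would have trivial holonomy, hence extend to a global oriented line field on $M$, again forcing $q=\omega^2$ for an Abelian differential $\omega$ --- the same contradiction.

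There is essentially no technical obstacle here; the only point deserving care is the identification of $\ker Ga$ with the canonical double cover, together with the observation that this cover being trivial is equivalent to $q$ being a global square, which is exactly the case excluded by the definition of a quadratic differential in this paper. It is worth noting that the evenness hypothesis on the zeroes is used only to guarantee that $Ga$ is well-defined on $H_1(M,\Z)$ in the first place (as in the remark preceding the lemma); the non-triviality of $Ga$ itself comes purely from $q$ not being the square of an Abelian differential.
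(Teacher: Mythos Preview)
Your proof is correct and follows essentially the same approach as the paper: both reduce the lemma to the single assertion that $Ga$ is not the zero homomorphism (so it cannot vanish on every element of a generating set), and both derive that non-triviality from the hypothesis that $q$ is not the square of an Abelian differential. The paper's proof simply asserts this last implication, whereas you spell it out via the identification of $\ker Ga$ with the canonical double cover and via the flat-geometry picture; so your version is a more detailed rendering of the same argument rather than a genuinely different route.
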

\begin{proof}
If all cycles in a basis for $H_1(M,\Z)$ have value $0$ under $Ga$, then all non-trivial cycles must also have value 0, as they can be written as a sum of the basis cycles. By assumption $q$ is not the square of an Abelian differential on $M$. Thus, there exists some $[\g]$ such that $Ga([\g])=1$, and $[\g]$ is not the trivial class because $H_1(M, \Z) \to \Z_2$ is a group homomorphism.
\end{proof}

We now wish to use $Ga$ to create an invariant of connected components of $\Q_\l$. To do so, recall that if we choose a cycle on a particular surface $M_0 \in \T_g$, it gives a well-defined cycle on any other $M \in \T_g$. One way to define this cycle is to construct the bundle $H \to \T_g$ with fiber $H_1(M, \Z)$ over each $M \in \T_g$, which as a discrete bundle has a well-defined notion of parallel transport. Since $\T_g$ is simply connected parallel transport does not depend on choice of path. 
\begin{prop} \label{prp:pt}
Let $(M_0, q_0), (M_1,q_1)$ be in the same connected component, $\Q_\l^0$, of
$\Q_\l$. Pick $[\g_0] \in H_1(M_0, \Z)$, and let $[\g_1] \in H_1(M_1, \Z)$ be the corresponding cycle on $M_1$.  Then $Ga([\g_1])=Ga([\g_0])$ 
\end{prop}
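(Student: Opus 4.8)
The plan is to show that the integer-valued quantity $Ga([\g_t])$ is locally constant along any path in $\Q_\l$ connecting $(M_0,q_0)$ to $(M_1,q_1)$; since it takes values in the discrete set $\{0,1\}$ and $\Q_\l^0$ is connected (hence path-connected, being a manifold), local constancy forces $Ga([\g_1]) = Ga([\g_0])$. So fix a path $(M_t, q_t)$, $t \in [0,1]$, in $\Q_\l^0$, and let $[\g_t] \in H_1(M_t,\Z)$ be the parallel transport of $[\g_0]$ along the image of this path in $\T_g$ (well-defined since $\T_g$ is simply connected, as noted before Proposition \ref{prp:pt}). It suffices to prove that for each $t$ there is a neighborhood of $t$ on which $Ga([\g_s])$ is constant.

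The key point is that the canonical double cover varies continuously with the quadratic differential. Fix $t_0$. Since all zeroes of $q_{t_0}$ have even order, the zeroes stay even (in fact of constant order, since we stay in the single stratum $\Q_\l$) and distinct for $s$ near $t_0$, so the canonical double cover $\pi_s : \tM_s \to M_s$ is everywhere unramified and has constant topological type; the monodromy representations $Ga_s : \pi_1(M_s) \to \Z_2$ fit together into a continuously (indeed locally constant) varying family. Concretely, choosing a fixed reference identification $\pi_1(M_s) \cong \pi_1(M_{t_0})$ via the path (again using $\pi_1(\T_g) = 1$ to make the identification path-independent), the composite $H_1(M_s,\Z) \to \Z_2$ does not change as $s$ varies near $t_0$: the index-two subgroup of $\pi_1$ cut out by $\pi_s$ is determined by which loops lift to loops, and a loop that lifts for $q_{t_0}$ still lifts for nearby $q_s$ by continuity of the covering data (one can realize $\tM_s$ locally as the fiber product over a disc neighborhood in $\Q_\l$). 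Under this identification the transported cycle $[\g_s]$ corresponds to $[\g_{t_0}]$ by definition of parallel transport in the discrete bundle $H \to \T_g$, so $Ga([\g_s]) = Ga_s([\g_s]) = Ga_{t_0}([\g_{t_0}]) = Ga([\g_{t_0}])$ for all $s$ near $t_0$.

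The main obstacle is making precise the claim that the canonical double cover — equivalently the monodromy homomorphism $Ga$ — varies continuously (locally constantly) in families, and in particular that it is compatible with the flat parallel transport in the homology bundle $H \to \T_g$. One clean way to handle this: note that $Ga_s$ is dual to the class in $H^1(M_s, \Z_2)$ classifying the double cover, and this class is exactly the obstruction to the existence of a square root of $q_s$ as an Abelian differential; since $q_s$ and its zero divisor (with all even multiplicities) vary continuously, the square-root line bundle $\sqrt{q_s} \in \mathrm{Pic}^{2g-2}(M_s)$ can be chosen to vary continuously over a neighborhood of $t_0$ in $\Q_\l$, and the associated $\Z_2$-class in $H^1$ — which equals $\sqrt{q_s}^{\otimes 2}$ in the $2$-torsion, or more intrinsically the difference class $[\sqrt{q_s}] - [\text{some fixed }K_i]$ — is locally constant in the discrete cohomology bundle $H^1(-,\Z_2) \to \T_g$. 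Pairing this locally constant cohomology class with the flat-transported cycle $[\g_s]$ gives a locally constant integer, which is $Ga([\g_s])$. I would also remark that this is where the hypothesis that all $k_i$ are even is essential: without it the cover is ramified, the topological type of $\tM$ is still constant within a stratum but the relevant invariant lives in $H^1(M \bs P, \Z_2)$ rather than $H^1(M,\Z_2)$, and the pairing with a homology class of $M$ is no longer well-defined — precisely the phenomenon flagged in the paragraph preceding Lemma \ref{lem:odd}.
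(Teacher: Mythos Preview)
Your proposal is correct and follows essentially the same approach as the paper: choose a path in $\Q_\l^0$, parallel transport the cycle along it, and observe that $t \mapsto Ga([\g_t])$ is a continuous map into the discrete space $\Z_2$, hence constant. The paper's proof is considerably terser --- it simply asserts this continuity --- whereas you supply additional justification (continuous variation of the canonical double cover, and the alternative cohomological description via the classifying class in $H^1(M_s,\Z_2)$); this extra detail is sound but not a genuinely different route.
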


\begin{proof} 
We have the projection $\pi:\Q_\l \to \T_g$, and may thus consider the bundle $\pi^*H$, which is again a discrete bundle on which parallel transport is well-defined. Pick a path $p:[0,1] \to \Q_\l^0$ from $(M_0, q_0)$ to $(M_1,q_1)$, $p(t)=(M_t,q_t) \in \Q_\l^0$, $0 \le t \le 1$.   Let $[\g_t]$ the class of $H_1(M_t, \Z)$ obtained from $[\g_0]$ via parallel transport in $\pi^*(H)$. The range of $Ga$ is $\Z_2$, so we have a continuous map, $[\g_t] \mapsto Ga([\g_t])$, into a discrete space, which must be constant. Since $[\g_0]$ will be taken to $[\g_1]$ along $p$, $Ga([\g_1])=Ga([\g_0])$. 
\end{proof}

\begin{corollary} \label{cor:comp}
Pick a basis, $B=(\alpha_1,...,\alpha_g, \beta_1,..,\beta_g)$, for $H_1(M_0, \Z)$ and let $B'$ be the corresponding basis for $H_1(M_1, \Z)$. $(M_1,q_1)$ cannot be in the same connected component of $\Q_\l$ as $(M_0,q_0)$ if $Ga(B') \ne Ga(B)$.  
\end{corollary}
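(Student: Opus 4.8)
The plan is to deduce this immediately from Proposition~\ref{prp:pt}, applied coordinate by coordinate to the chosen basis, via the contrapositive. Here I read $Ga(B)$ as the tuple $(Ga(\a_1),\dots,Ga(\a_g),Ga(\b_1),\dots,Ga(\b_g)) \in \Z_2^{2g}$, and $Ga(B')$ as the analogous tuple for the transported basis $B' = (\a_1',\dots,\a_g',\b_1',\dots,\b_g')$, where each $\a_i'$ (resp.\ $\b_i'$) is the class in $H_1(M_1,\Z)$ obtained from $\a_i$ (resp.\ $\b_i$) by parallel transport in the discrete bundle $H \to \T_g$.

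First I would suppose, for contradiction, that $(M_0,q_0)$ and $(M_1,q_1)$ lie in a common connected component $\Q_\l^0$ of $\Q_\l$. Then for each basis cycle in turn --- take $[\g_0] = [\a_i]$, and afterwards $[\g_0] = [\b_i]$ --- Proposition~\ref{prp:pt} applies verbatim (it is stated for an arbitrary class in $H_1(M_0,\Z)$, with no simplicity hypothesis) and gives $Ga(\a_i') = Ga(\a_i)$ and $Ga(\b_i') = Ga(\b_i)$ for every $i$. Assembling these $2g$ equalities yields $Ga(B') = Ga(B)$ in $\Z_2^{2g}$, contradicting the hypothesis $Ga(B') \ne Ga(B)$; hence no such component exists, which is exactly the claim.

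The only point needing a remark rather than an argument is that the phrase ``the corresponding basis'' is unambiguous: parallel transport in $H \to \T_g$ is path independent because $\T_g$ is simply connected, so $B'$ is determined by $B$ and the endpoint $M_1$ alone and, in particular, does not depend on which path inside $\Q_\l^0$ one uses in the proof of Proposition~\ref{prp:pt} (that path serves only to certify equality of $Ga$-values; the cycle it transports to is forced). I expect no genuine obstacle here --- all the content sits in Proposition~\ref{prp:pt}, and, combined with Lemma~\ref{lem:odd}, it shows $Ga(B)$ is a well-defined invariant of the component taking values in $\Z_2^{2g} \bs \{0\}$, a set of size $2^{2g}-1$; the corollary merely repackages Proposition~\ref{prp:pt} into the form the next section uses to separate components (and ultimately to match the upper bound of Theorem~\ref{thm:c2}).
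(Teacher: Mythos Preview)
Your argument is correct and essentially identical to the paper's own proof: assume the two points lie in a common component, apply Proposition~\ref{prp:pt} to each basis cycle to obtain $Ga(B)=Ga(B')$, and contradict the hypothesis. The additional remarks you make about path independence and the role of Lemma~\ref{lem:odd} are accurate but go slightly beyond what the paper records for this corollary.
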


\begin{proof}
Suppose they are in the same component. Pick a path from $(M_0,q_0)$ to $(M_1,q_1)$, lying in $\Q_\l^0$, and parallel transport $B$ along the path to $B'$. $B'$ will be a basis for $H_1(M_1, \Z)$ and by Proposition \ref{prp:pt}  Ga($B$) = Ga($B'$). 
\end{proof}

We would like to construct bases for $H_1(M, \Z)$ with different values under $Ga$ and apply Corollary \ref{cor:comp} to show that there exist multiple components of $\Q_\l$. To do so, we will consider the action of the standard mapping class group, $\Gamma_g$, on $\Q_\l$, and in particular the action of Dehn twists. 

\begin{lemma} \label{lem:twist}
Let $(M,q) \in \Q_\l$, and let $\gamma$ be a simple closed curve on $M$. If $\gamma \cdot \delta = 1$, Dehn twisting around $\gamma$ takes $\delta$ to a new curve $\delta'$ such that $Ga([\delta']) = Ga([\delta])+ Ga([\gamma])$. If $\gamma \cdot \delta = 0$ then Dehn twisting around $\gamma$ has no effect on $Ga([\delta])$.
\end{lemma}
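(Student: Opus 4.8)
The plan is to deduce the lemma from the Picard--Lefschetz formula for the action of a Dehn twist on $H_1(M,\Z)$, combined with two facts already established above: that $Ga$ is a group homomorphism, and --- because every zero of $q$ is even --- that it is defined on the homology of the \emph{closed} surface, so $Ga([\eta])$ depends only on the class $[\eta]\in H_1(M,\Z)$ and not on any chosen curve representing it. (Throughout, $\gamma\cdot\delta$ denotes the algebraic intersection number.)

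First I would invoke the standard description of the Dehn twist $T_\gamma$ on first homology: for any simple closed curve $\delta$,
\[
[T_\gamma(\delta)] \;=\; [\delta] \pm (\delta\cdot\gamma)\,[\gamma] \qquad\text{in } H_1(M,\Z),
\]
the sign depending on the handedness of the twist but being immaterial once we reduce mod $2$. Writing $\delta'=T_\gamma(\delta)$ and applying the homomorphism $Ga$ gives $Ga([\delta']) = Ga([\delta]) + (\delta\cdot\gamma)\,Ga([\gamma])$ in $\Z_2$. If $\delta\cdot\gamma=\pm 1$ then $(\delta\cdot\gamma)\,Ga([\gamma]) = Ga([\gamma])$, so $Ga([\delta'])=Ga([\delta])+Ga([\gamma])$; if $\delta\cdot\gamma=0$ then $[\delta']=[\delta]$ and hence $Ga([\delta'])=Ga([\delta])$. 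That completes the argument.

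I would also include a short geometric paraphrase, since it explains the name $Ga$. After isotoping $\gamma$ and $\delta$ off the zero set $P$, in the case $\delta\cdot\gamma=1$ the curve $\delta'$ is freely homotopic in $M\bs P$ to the loop obtained by running along $\delta$ and then once around $\gamma$. The holonomy of the flat connection induced by $q$ takes values in the abelian group $\{\pm\mathrm{Id}\}$ of half-turns of the tangent plane, so parallel transport around $\delta'$ is the composite of parallel transport around $\delta$ and around $\gamma$, in either order; this composite is $-\mathrm{Id}$ precisely when exactly one of the two transports is, which is the relation $Ga([\delta'])=Ga([\delta])+Ga([\gamma])$ in $\Z_2$. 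When $\delta\cdot\gamma=0$ the class $[\delta']=[\delta]$ is unchanged (and if $\gamma,\delta$ are actually disjoint this is visibly so), so $Ga$ is unaffected.

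I do not expect any genuine obstacle. The only points requiring care are the sign convention in the Picard--Lefschetz formula, which disappears mod $2$, and the assertion that $Ga$ really is a function of the homology class $[\delta]$ rather than of a punctured-surface representative; the latter is exactly where the hypothesis that all $k_i$ are even enters (around an even-order zero the flat-metric holonomy is a full rotation, hence the monodromy is trivial), as already noted in the discussion preceding Lemma~\ref{lem:odd}. If one runs the geometric version instead, the one step worth a sentence is identifying the free homotopy class of $T_\gamma(\delta)$ with the concatenation of $\delta$ and $\gamma$, which is standard.
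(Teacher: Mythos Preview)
Your argument is correct and is essentially the same as the paper's: both rely on the action of the Dehn twist on $H_1(M,\Z)$ (so $[\delta']=[\delta]+[\gamma]$ when $\gamma\cdot\delta=1$ and $[\delta']=[\delta]$ when $\gamma\cdot\delta=0$) together with the fact that $Ga$ is a homomorphism depending only on the homology class. The paper's version is simply terser, omitting the general Picard--Lefschetz formula and the geometric paraphrase you include.
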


\begin{proof}
If $\gamma \cdot \delta=1$ then $\delta'$ is homologous to $\delta + \gamma$. If $\gamma \cdot \delta=0$, then Dehn twisting around $\gamma$ just takes $\delta$ to $\delta$. 
\end{proof}

As an example, we consider the structure and interactions of the following strata: $\Q_1(-2,1,1)$, $\H_1(\emptyset)$, and $\Q_1(-2,2)$. (In general we restrict ourselves to strata of holomorphic quadratic differentials, but genus one allows for explicit calculation and $\Q_1(-2,2)$ is the simplest stratum with poles of all even order.) We think of a genus one surface as $\C/\Lambda$, where $\Lambda$ is the lattice generated by $\{ 1,\tau \}$ and $\tau$ is an element of the upper half plane. $\T_1$ may therefore be identified with the upper half plane. We will denote by $\a$ the homology class of the curve $[0,1] \to \C/\Lambda$, $t \mapsto t\tau$, and by $\b$ the homology class of the curve $[0,1] \to \C/\Lambda$, $t \mapsto t$.        
On a genus one surface $K^2$ is just the trivial bundle, and the sum of the zeroes of any quadratic differential must add to zero (mod $\Lambda$). Notice that a function on $\C/\Lambda$ cannot have a single zero and a single pole because they would be forced to be in the same place, so $\H_1(1, -1)$ is empty and we need not worry about elements of $\Q_1(-2,2)$ being squares of Abelian differentials. On the other hand, any quadratic differential with no zeroes or poles is of the form $k dz^2$ and must have square root $\sqrt{k} dz \in \H_1(\emptyset)$, so $\Q_1(\emptyset)$ is also empty. 

\begin{figure}\label{fig:dbl}
\begin{center}
\includegraphics[width=.35 \textwidth]{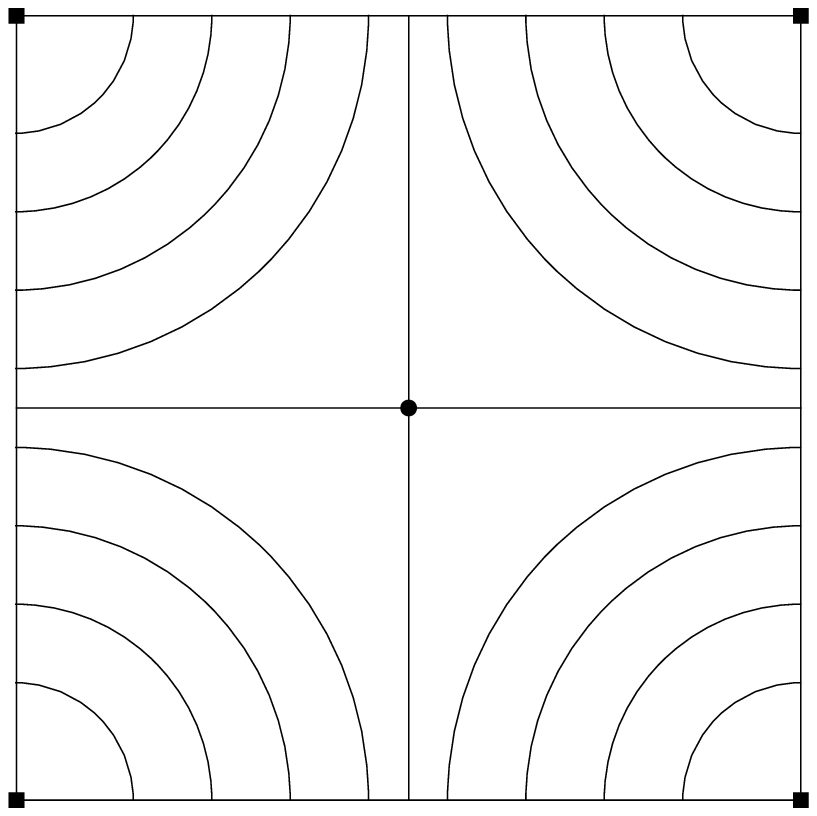}
\ \
\includegraphics[width=.55 \textwidth]{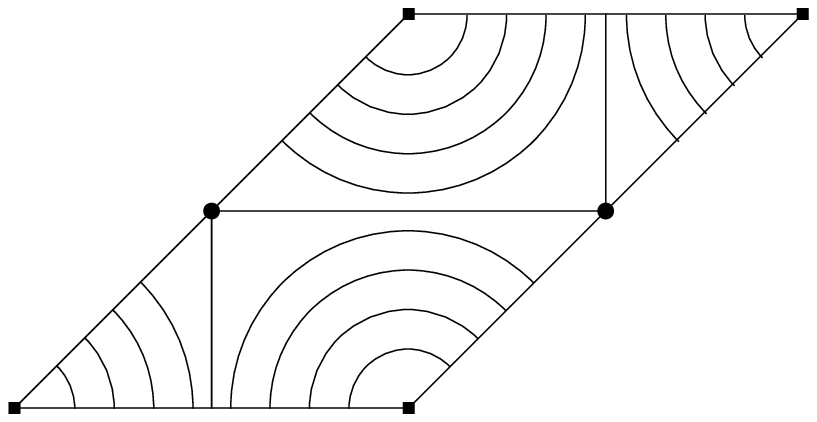}
\end{center}
\caption{The horizontal foliations of quadratic differentials given by the Weierstrass $\mathfrak{p}$ function for $\tau=i$ and $\tau=1+i$. Square points represent double poles, and circular points represent double zeroes.} 
\label{fig:uhp}
\end{figure}

Because of the triviality of the canonical bundle, any triplet of points, $(x_1,x_2,x_3)$ such that $x_1+x_2=x_3$ (mod $\Lambda$), will be the zeroes and double pole of a quadratic differential on $M = \C / \Lambda$. However, we have to factor out the 1 dimensional automorphism group of any torus, so we assume that the double pole of $q$, $x_3$, is at 0. This forces the two remaining zeroes of $q$ to be at $x$ and $-x$. In general $x$ and $-x$ are distinct and $(M,q) \in \Q(-2,1,1)$; however, if $x=1/2, \tau/2 $, or $\frac{1+\tau}{2}$ then $x=-x$ and $(M, q) \in \Q_1(-2,2)$. If $x=0$ then the section has no zeroes or poles and is in $\H_1(\emptyset)$. 
Then let $\mathcal{C}_1$ be the \textit{universal curve} over $\T_1$ - in other words, the space constructed by putting over each point in Teichmuller space the corresponding curve. The `zero section' of $\mathcal{C}_1$ is $\P\H_1(\emptyset)$, which may then be identified with $\T_1$, or the upper half plane. $\P \Q_1(-2,2)$ is the union of the sections of $\mathcal{C}_1$ corresponding to $1/2, \tau/2$, and $\frac{1+\tau}{2}$, which thus has 3 components, each a copy of $\T_1$, and containing surfaces whose homology bases have image $(0,1), (1,0)$, and $(1,1)$ (not necessarily respectively) under $Ga$. $\P \Q_1(-2,1,1)$ is $\mathcal{C}_1$ minus the above 4 sections, with points of the form $(M,x)$ and $(M,-x)$ identified. It is therefore connected.    

Figure \ref{fig:dbl} shows the singularities and horizontal foliations of two quadratic differentials, on $\tau=i$ and $\tau=1+i$, where the second is the result of Dehn twisting the first along $\b$. The quadratic differentials on the two surfaces are given by the Weierstrass $\mathfrak{p}$ function, which is defined on the curve with modulus $\tau$ as follows:
$$\mathfrak{p}(z)=\frac{1}{z^2}+\sum_{m,n \in \Z} \left( \frac{1}{(z-m-n\tau)^2} - \frac{1}{(m+n\tau)^2} \right)$$
 where $ (m,n) \ne (0,0)$. This is a function on the plane but is periodic with respect to $\Lambda$, and thus descends to a function on $\C / \Lambda$. We may then define a quadratic differential on $\C / \Lambda$, $q_{\tau}=\mathfrak{p}(z)dz^2$. For most $\tau$ $q_{\tau}$ has two single zeroes and a double pole at $0$; however, for $\tau = i$ it has a single double zero at $1/2+i/2$, and for $\tau=1+i$ it again has a double zero at at the same location. We can see that for $\tau=i$, $Ga(\alpha)=Ga(\b)=1$, and for $\tau=1+i$, $Ga(\a)=0$ and $Ga(\b)=1$, as predicted by Lemma \ref{lem:twist}. 

We wish to apply a similar analysis to the above to a wider variety of examples. For any $(M,q) \in \Q_\l$, Lemma \ref{lem:odd} implies that any basis for $H_1(M, \Z)$ will contain  a cycle, $[\g]$, such that $Ga([\g])=1$. We may then create $(M',q') \in \Q_\l$ by Dehn twisting around $\g$ (essentially, picking a new homology basis for $M$), and Lemma \ref{lem:twist} and Corollary \ref{cor:comp} imply that $(M,q)$ and $(M',q')$ must be in different connected components of $\Q_\l$. In the next theorem we show that we can use Dehn twists to create $(M,q)$ with homology basis that has any image under $Ga$ except $(0,0,0,...,0)$. Thus $\Q_\l$ will have a minimum of $2^{2g}-1$ components. 

\begin{theorem} \label{thm:mcg}
Let $\Q_\l=\Q_g(k_1,...,k_n)$ with the $k_i$ all even. Then $\Q_\l$ has at least $2^{2g}-1$ connected components. 
\end{theorem}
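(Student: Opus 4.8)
The plan is to realize every nonzero element of $H_1(M,\Z_2)^\vee \cong \Z_2^{2g}$ as the image $Ga(B)$ of a homology basis arising from some $(M,q) \in \Q_\l$, and then invoke Corollary \ref{cor:comp} to conclude that the $\Q_\l$ containing these differentials lie in pairwise distinct connected components. Since there are $2^{2g}-1$ nonzero elements, this produces at least $2^{2g}-1$ components. Fix any starting point $(M_0,q_0) \in \Q_\l$ (nonempty by the hypotheses on $\l$) with standard symplectic basis $B_0 = (\a_1,\dots,\a_g,\b_1,\dots,\b_g)$. By Lemma \ref{lem:odd}, $v_0 := Ga(B_0) \in \Z_2^{2g}$ is nonzero. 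The key point is that the standard mapping class group $\Gamma_g$ acts on $\Q_\l$ (it acts on the set of $(M,q)$, and on homology bases), and a single Dehn twist has a completely explicit effect on $Ga$ by Lemma \ref{lem:twist}: twisting about $\g$ adds $Ga([\g])$ to $Ga([\delta])$ precisely for those basis curves $\delta$ with $\g \cdot \delta = 1$.

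First I would record that Dehn twists about the standard curves $\a_i, \b_i$ already generate enough moves. Twisting about $\b_i$ fixes every basis curve except $\a_i$, sending $\a_i \mapsto \a_i + \b_i$; so it replaces the value $Ga(\a_i)$ by $Ga(\a_i) + Ga(\b_i)$ while leaving all $2g-1$ other coordinates of $Ga(B)$ unchanged. Symmetrically, twisting about $\a_i$ replaces $Ga(\b_i)$ by $Ga(\b_i) + Ga(\a_i)$. Thus on the vector $v = Ga(B) = (a_1,\dots,a_g,b_1,\dots,b_g)$ we have available the elementary transvections $T_{\b_i}: a_i \mapsto a_i + b_i$ and $T_{\a_i}: b_i \mapsto b_i + a_i$ (all other coordinates fixed). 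Composing twists about curves in the mapping class group orbit of the standard ones — equivalently, using that $\Gamma_g \to Sp(2g,\Z_2)$ is surjective — gives us the full symplectic group $Sp(2g,\Z_2)$ acting on the $v$'s; concretely each element of $Sp(2g,\Z_2)$ is a product of the $T_{\a_i},T_{\b_i}$ and the swaps $\a_i \leftrightarrow \b_i$ (also realized by twists).

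Next I would verify that $Sp(2g,\Z_2)$ acts transitively on $\Z_2^{2g} \setminus \{0\}$. This is standard: given any nonzero $v$, one can complete it to a symplectic basis of $(\Z_2^{2g}, \langle\,,\rangle)$ — if $v$ pairs nontrivially with some vector extend directly, and if $v$ is isotropic use nondegeneracy to find $w$ with $\langle v,w\rangle=1$ and proceed by induction on the complementary subspace — so there is a symplectic transformation carrying the first standard basis vector to $v$. Applying the corresponding product of Dehn twists to $(M_0,q_0)$ produces $(M,q) \in \Q_\l$ whose transported basis $B$ has $Ga(B) = v$, for every nonzero $v$. (One must note that $Ga$ depends on the flat structure, not just on $M$; but Lemma \ref{lem:twist} was proved at the level of the homology class represented by the twisted curve, so it is valid for the differential $(M,q)$ we land on, whatever it is.) Finally, by Corollary \ref{cor:comp}, if $v \ne v'$ are both nonzero then the corresponding differentials lie in different components of $\Q_\l$, giving at least $2^{2g}-1$ components.

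The main obstacle, and the place needing the most care, is the passage from "twists about the standard $\a_i,\b_i$" to "all of $Sp(2g,\Z_2)$": one must be sure that the homeomorphisms used are genuine mapping classes acting on $\Q_\l$ (they are, since Dehn twists preserve the complex structure up to isotopy only after remarking that the $\Gamma_g$-action on $\Q_\l$ — not on a fixed $M$ — is what is in play, exactly as in the discussion preceding the theorem), and that composing them tracks $Ga$ additively as claimed even though intermediate curves are no longer the standard ones. Here Lemma \ref{lem:twist} applies to \emph{any} simple closed curve $\g$ and \emph{any} $\delta$, and the homomorphism property $Ga([\gamma+\eta]) = Ga([\g]) + Ga([\eta])$ guarantees the bookkeeping is consistent; so the obstacle is really just organizational, and the transitivity of $Sp(2g,\Z_2)$ on $\Z_2^{2g}\setminus\{0\}$ closes the argument.
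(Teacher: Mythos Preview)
Your argument is correct in outline and reaches the same conclusion as the paper, but it takes a more conceptual route. The paper fixes a symplectic basis $(\a_1,\dots,\a_g,\b_1,\dots,\b_g)$ together with auxiliary curves $\gamma_i$ homologous to $\a_i+\a_{i+1}$, and then works out by hand a sequence of Dehn twists (about the $\a_i$, $\b_i$, and $\gamma_i$) that changes the parity of any single basis cycle while fixing the others; the $\gamma_i$ are essential because they are what carry oddness from one handle to the next. You instead package this as the single statement that $\Gamma_g$ surjects onto $Sp(2g,\Z_2)$ and that $Sp(2g,\Z_2)$ acts transitively on $\Z_2^{2g}\setminus\{0\}$, which is a cleaner and more structural formulation. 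The trade-off is that the paper's argument is entirely self-contained from Lemma~\ref{lem:twist}, while yours imports two (standard) outside facts.

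One genuine slip to fix: your parenthetical claim that ``each element of $Sp(2g,\Z_2)$ is a product of the $T_{\a_i},T_{\b_i}$ and the swaps $\a_i\leftrightarrow\b_i$'' is false. Those moves act only within each handle separately and generate $\prod_i SL(2,\Z_2)$, not $Sp(2g,\Z_2)$; you need at least one twist mixing handles (exactly the role of the paper's $\gamma_i$, which give transvections along $\a_i+\a_{i+1}$). This does not damage your proof, since you already invoke the surjection $\Gamma_g\twoheadrightarrow Sp(2g,\Z_2)$ and twists about arbitrary simple closed curves, but the sentence as written should be deleted or corrected.
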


 \begin{figure}
\begin{center}
\includegraphics[width=.9 \textwidth]{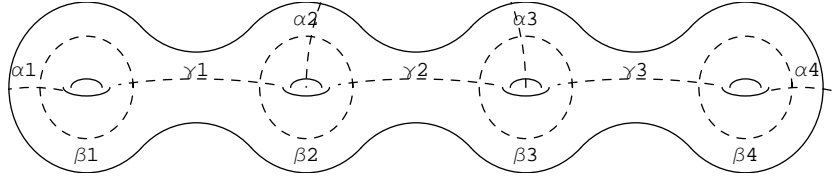}
\end{center}
\caption{A genus 4 surface with curves labeled as in Theorem \ref{thm:mcg}.}
\label{fig:genusg}
\end{figure}

\begin{proof}

Let $(M,q) \in \Q_\l$. By Lemma \ref{lem:odd} $(M,q)$ has a symplectic basis for $H_1(M, \Z)$ with at least one cycle of odd degree. Call the basis $(\a_1,...,\a_g, \b_1,...,\b_g)$ and let $Ga(\a_1,...,\a_g, \b_1,...,\b_g)=(a_1,...,a_g, b_1,...,b_g)$. We also construct curves $\gamma_1,...,\gamma_{g-1}$ on $M$ as in Figure \ref{fig:genusg} and assign them degrees $c_1,...,c_{g-1}$. Note that $\gamma_i$ is homologous to $\a_i+\a_{i+1}$ (with an appropriate choice of orientation of the curves) so $Ga(\gamma_i)=Ga(\alpha_i)+Ga(\alpha_{i+1})$ (mod 2). To prove the theorem we show that one can repeatedly apply Dehn twists to get a homology basis of any of the $2^{2g}$ possible parities, except $(0,0,...,0)$, and then apply Corollary \ref{cor:comp}. To construct such a set of bases it suffices to show that, given one odd cycle, we can change the parity of any of the other $2g-1$ cycles in the basis while fixing the parity of the remaining $2g-2$. 

Suppose $\a_i$ is the cycle of odd degree given by assumption. We change only the parity of $b_i$ by Dehn twisting around $\a_i$, by Lemma \ref{lem:twist}. Now suppose we want to change the parity of only $b_{i+1}$. If $c_i$ is odd we Dehn twist around $\gamma_i$ to change the parity of $b_i, b_{i+1}$, and then twist again around $\a_i$ to change the parity of $b_i$ back. If $c_i$ is even then $a_i + a_{i+1}$ is even, so $a_{i+1}$ is odd, so we can twist around $\a_{i+1}$ to change the parity of $b_{i+1}$ only.

Once we can change the parity of only $b_{i+1}$ we can also only change the parity of only $a_{i+1}$. If $b_{i+1}$ is odd, twist around $\b_{i+1}$. If $b_{i+1}$ is even, change its parity, twist around $\b_{i+1}$ to change the parity of $a_{i+1}$, and change the parity of $b_{i+1}$ back to even.
  
Now suppose $k > i$ and we can change the parity of $b_k, a_k$ individually. Then we can assume $a_k$ is odd. If $c_k$ is odd we can change the parity of only $b_{k+1}$ by twisting around $\gamma_k$ and then changing the parity of $b_k$ back. If $c_k$ is even then $a_{k+1}$ is odd and we twist around $\a_{k+1}$ to change the parity of $b_{k+1}$. Similarly we can then change the parity of only $a_{k+1}$ at will by twisting around $\b_{k+1}$. For $k < i$ we make the same argument to show that we can change $a_k$, $b_k$ individually. 

Thus given any $a_i$ odd we can individually change the parity of all $a_j, b_j$ except $a_i$ itself. However if a specific $a_i$ is odd we can make any $a_j$ odd, and make the same argument. This gives us any configuration of parities where at least one $a_j$ is odd, $1 \le j \le g$. On the other hand if at least one $b_i$ is odd, similar arguments to the above allow us to achieve any configuration of parities except those where all $b_j$ are even, $1 \le j \le g$. The combination of these two cases proves the theorem. 
\end{proof}

\begin{corollary} \label{cor:22g-1}
Let $\Q_\l=\Q_g(2^m, k_1^{n_1}, k_2^{n_2},..., k_l^{n_l})$ where all the $k_i$ are even and $m \ge g$. Then $\Q_\l$ has exactly $2^{2g}-1$ connected components.
\end{corollary}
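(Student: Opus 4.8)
The plan is to obtain the exact count by squeezing $\Q_\l$ between the upper bound of Theorem \ref{thm:c2} and the lower bound of Theorem \ref{thm:mcg}, both of which equal $2^{2g}-1$ for a stratum of this shape.

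First I would observe that every zero of a quadratic differential in $\Q_\l$ has even order: the $m$ zeroes coming from the block $2^m$ have order $2$, and the remaining zeroes have orders $k_1,\ldots,k_l$, all even by hypothesis. Hence $\Q_\l$ is a stratum of the type treated in Section \ref{sec:low}, and Theorem \ref{thm:mcg} applies verbatim, giving that $\Q_\l$ has at least $2^{2g}-1$ connected components. Note that this half of the argument does not use $m \ge g$ at all.

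Next I would check that $\l=(2^m,k_1^{n_1},\ldots,k_l^{n_l})$ with $m \ge g$ and all $k_i$ even is precisely the hypothesis of Theorem \ref{thm:c2}; applying that theorem gives that $\Q_\l$ has at most $2^{2g}-1$ connected components. Combining the two bounds forces equality. Since the two theorems are invoked under matching hypotheses and the constraint $m \ge g$ enters only through the upper bound, there is no circularity, and there is in fact no real obstacle here: the content of the corollary lies entirely in the two theorems it combines. One could add the remark that this also pins down the components geometrically — they are exactly the sets $(\P\Q_\l)_{K_i}$ as $K_i$ ranges over the square roots of $K^2$ other than $K$, since the proof of Theorem \ref{thm:c2} shows each such set is connected while Theorem \ref{thm:mcg} shows they are pairwise distinct.
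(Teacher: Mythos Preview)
Your proposal is correct and matches the paper's own proof essentially word for word: the paper simply invokes Theorem~\ref{thm:c2} for the upper bound and Theorem~\ref{thm:mcg} for the lower bound, exactly as you do. Your additional remark identifying the components with the $(\P\Q_\l)_{K_i}$ is a nice gloss but goes slightly beyond what the paper states in the proof itself.
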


\begin{proof}
Theorem \ref{thm:c2} implies that the above strata have at most $2^{2g}-1$ connected components and Theorem \ref{thm:mcg} implies they have at least that many. 
\end{proof}

Recall that by the results of Section \ref{sec:up} we expected each of the components of Corollary \ref{cor:22g-1} to consist of quadratic differentials whose square roots are a section of a particular line bundle that squares to $K^2$. We may associate to each such bundle a $\Z_2$ valued linear functional on $H_1(M, \Z_2)$, in an analog of the spin structure construction of Section \ref{sec:back}, and this will coincide with the degree of the Gauss map. Thus it is not surprising that we should be able to use $Ga$ to classify connected components of the $\Q_\l$ with even zeroes.

\end{document}